\date{\today}
\newtheorem{theorem}{Theorem}
\newtheorem{proposition}[theorem]{Proposition}
\newtheorem{corollary}[theorem]{Corollary}
\newtheorem{lemma}[theorem]{Lemma}
\theoremstyle{definition}
\newtheorem{example}[theorem]{Example}
\newtheorem{remark}[theorem]{Remark}
\newtheorem{definition}[theorem]{Definition}
\begin{document}

\title[On a complete topological inverse polycyclic monoid]{On a complete topological inverse polycyclic monoid}

\author{Serhii Bardyla}

\author{Oleg Gutik}
\address{Faculty of Mathematics, National University of Lviv,
Universytetska 1, Lviv, 79000, Ukraine}
\email{sbardyla@yahoo.com}
\email{o\underline{\hskip5pt}\,gutik@franko.lviv.ua,
ovgutik@yahoo.com}

\keywords{Inverse semigroup, bicyclic monoid, polycyclic monoid, free monoid, semigroup of matrix units, topological semigroup, topological inverse semigroup, minimal topology.}

\subjclass[2010]{22A15, 22A26, 54A10, 54D25, 54D35, 54H11}

\begin{abstract}
We give sufficient conditions when a topological inverse $\lambda$-polycyclic monoid $P_{\lambda}$ is absolutely $H$-closed in the class of topological inverse semigroups. Also, for every infinite cardinal $\lambda$ we construct the coarsest semigroup inverse topology $\tau_{mi}$ on $P_\lambda$ and give an example of a topological inverse monoid S which contains the polycyclic monoid $P_2$ as a dense discrete subsemigroup.
\end{abstract}

\maketitle


In this paper all topological spaces will be assumed to be Hausdorff. We shall follow the terminology of~\cite{Carruth-Hildebrant-Koch-1983-1986, Clifford-Preston-1961-1967, Engelking-1989, Lawson-1998}. If $A$ is a subset of a topological space $X$, then we denote the closure of the set $A$ in $X$ by
$\operatorname{cl}_X(A)$. By $\mathbb{N}$ we denote the set of all positive integers and by $\omega$ the first infinite cardinal.

A semigroup $S$ is called an \emph{inverse semigroup} if every $a$
in $S$ possesses an unique inverse, i.e. if there exists an unique
element $a^{-1}$ in $S$ such that
\begin{equation*}
    aa^{-1}a=a \qquad \mbox{and} \qquad a^{-1}aa^{-1}=a^{-1}.
\end{equation*}
A map which associates to any element of an inverse semigroup its
inverse is called the \emph{inversion}.

A \emph{band} is a semigroup of idempotents. If $S$ is a semigroup, then we shall denote the subset of idempotents in $S$ by $E(S)$. If $S$ is an inverse semigroup, then $E(S)$ is closed under multiplication. The semigroup operation on $S$
determines the following partial order $\leqslant$ on $E(S)$: $e\leqslant f$ if and only if $ef=fe=e$. This order is called the {\em natural partial order} on $E(S)$. A \emph{semilattice} is a commutative semigroup of idempotents. A semilattice $E$ is called {\em linearly ordered} or a \emph{chain} if its natural order is a linear order. A \emph{maximal chain} of a semilattice $E$ is a chain which is properly contained in no other chain of $E$. The Axiom of Choice implies the existence of maximal chains in any partially ordered set. According to \cite[Definition~II.5.12]{Petrich-1984} a chain $L$ is called $\omega$-chain if $L$ is order isomorphic to $\{0,-1,-2,-3,\ldots\}$ with the usual order $\leqslant$. Let $E$ be a semilattice and $e\in E$. We denote ${\downarrow} e=\{ f\in E\mid f\leqslant e\}$
and ${\uparrow} e=\{ f\in E\mid e\leqslant f\}$.

If $S$ is a semigroup, then we shall denote by $\mathscr{R}$,
$\mathscr{L}$, $\mathscr{D}$ and $\mathscr{H}$ the
Green relations on $S$ (see \cite{GreenJ1951} or \cite[Section~2.1]{Clifford-Preston-1961-1967}):
\begin{center}
\begin{tabular}{rcl}
  $a\mathscr{R}b$ & if and only if & $aS^1=bS^1$; \\
  $a\mathscr{L}b$ & if and only if & $S^1a=S^1b$; \\
     & $\mathscr{D}=\mathscr{L}{\circ}\mathscr{R}=\mathscr{R}{\circ}\mathscr{L}$; &\\
    & $\mathscr{H}=\mathscr{L}\cap\mathscr{R}$. &\\
\end{tabular}
\end{center}
The $\mathscr{R}$-class (resp., $\mathscr{L}$-, $\mathscr{H}$-, or $\mathscr{D}$--class) of the semigroup $S$ which contains an element $a$ of $S$ will be denoted by $R_a$ (resp., $L_a$, $H_a$, or $D_a$).

The bicyclic monoid ${\mathscr{C}}(p,q)$ is the semigroup with the identity $1$ generated by two elements $p$ and $q$ subjected only to the condition $pq=1$. The semigroup operation on ${\mathscr{C}}(p,q)$ is determined as
follows:
\begin{equation*}
    q^kp^l\cdot q^mp^n=q^{k+m-\min\{l,m\}}p^{l+n-\min\{l,m\}}.
\end{equation*}
It is well known that the bicyclic monoid ${\mathscr{C}}(p,q)$ is a bisimple (and hence simple) combinatorial $E$-unitary inverse semigroup and every non-trivial congruence on ${\mathscr{C}}(p,q)$ is a group congruence \cite{Clifford-Preston-1961-1967}. Also the well
known  Andersen Theorem states that \emph{a simple semigroup $S$ with an idempotent is completely simple if and only if $S$ does not contains an isomorphic copy of the bicyclic semigroup} (see \cite{Andersen-1952} and \cite[Theorem~2.54]{Clifford-Preston-1961-1967}).

Let $\lambda$ be a non-zero cardinal. On the set
 $
 B_{\lambda}=(\lambda\times\lambda)\cup\{ 0\}
 $,
where $0\notin\lambda\times\lambda$, we define the semigroup
operation ``$\, \cdot\, $'' as follows
\begin{equation*}
(a, b)\cdot(c, d)=
\left\{
  \begin{array}{cl}
    (a, d), & \hbox{ if~ } b=c;\\
    0, & \hbox{ if~ } b\neq c,
  \end{array}
\right.
\end{equation*}
and $(a, b)\cdot 0=0\cdot(a, b)=0\cdot 0=0$ for $a,b,c,d\in\lambda$. The semigroup $B_{\lambda}$ is called the \emph{semigroup of $\lambda{\times}\lambda$-matrix units}~(see \cite{Clifford-Preston-1961-1967}).

In 1970 Nivat and Perrot proposed the following generalization of the bicyclic monoid (see \cite{Nivat-Perrot-1970} and \cite[Section~9.3]{Lawson-1998}). For a non-zero cardinal $\lambda$, the polycyclic monoid on $\lambda$ generators $P_\lambda$ is the semigroup with zero given by the presentation:
\begin{equation*}
    P_\lambda=\big\langle \{p_i\}_{i\in\lambda}, \{p_i^{-1}\}_{i\in\lambda}\mid p_i p_i^{-1}=1, p_ip_j^{-1}=0 \hbox{~for~} i\neq j\big\rangle.
\end{equation*}
It is obvious that in the case when $\lambda=1$ the semigroup $P_1$ is isomorphic to the bicyclic semigroup with adjoined zero. For every finite non-zero cardinal $\lambda=n$ the polycyclic monoid $P_n$ is a congruence free, combinatorial, $0$-bisimple, $0$-$E$-unitary inverse semigroup (see \cite[Section~9.3]{Lawson-1998}).

A {\it topological} ({\it inverse}) {\it semigroup} is a Hausdorff
topological space together with a continuous semigroup operation
(and an~inversion, respectively). Obviously, the inversion defined
on a topological inverse semigroup is a homeomorphism. If $S$ is
a~semigroup (an~inverse semigroup) and $\tau$ is a topology on $S$
such that $(S,\tau)$ is a topological (inverse) semigroup, then we
shall call $\tau$ a (\emph{inverse}) \emph{semigroup}
\emph{topology} on $S$.  A {\it semitopological semigroup} is a
Hausdorff topological space endowed with a separately continuous
semigroup operation.

Let $\mathfrak{STSG}_0$ be a class of topological semigroups. A
semigroup $S\in\mathfrak{STSG}_0$ is called {\it $H$-closed in}
$\mathfrak{STSG}_0$, if $S$ is a closed subsemigroup of any
topological semigroup $T\in\mathfrak{STSG}_0$ which contains $S$
both as a subsemigroup and as a topological space. The $H$-closed
topological semigroups were introduced by Stepp in
\cite{Stepp-1969}, and there they were called {\it maximal
semigroups}. A topological semigroup $S\in\mathfrak{STSG}_0$ is
called {\it absolutely $H$-closed in the class} $\mathfrak{STSG}_0$,
if any continuous homomorphic image of $S$ into
$T\in\mathfrak{STSG}_0$ is $H$-closed in $\mathfrak{STSG}_0$.
Absolutely $H$-closed topological semigroups were introduced by
Stepp in~\cite{Stepp-1975}, and there they were called {\it
absolutely maximal}.

Recall \cite{Aleksandrov-1942}, a topological group $G$ is called {\it absolutely
closed} if $G$ is a closed subgroup of any topological group which contains $G$ as a subgroup. In our terminology such topological groups are called $H$-closed in the class of topological groups. In \cite{Raikov-1946} Raikov proved that a topological group $G$ is absolutely closed if and only if it is Raikov complete, i.e., $G$
is complete with respect to the two-sided uniformity. A topological group $G$ is called {\it $h$-complete} if for every continuous homomorphism $h\colon G\to H$ the subgroup $f(G)$ of $H$ is closed~\cite{Dikranjan-Uspenskij-1998}. In our terminology such topological groups are called absolutely $H$-closed in the class of topological groups. The $h$-completeness is preserved under taking products and closed central subgroups~\cite{Dikranjan-Uspenskij-1998}. $H$-closed paratopological and topological groups in the class of paratopological groups were studied in \cite{Ravsky-2003}. The paper \cite{Bardyla-Gutik-Ravsky-2016-?} contains a sufficient condition for a quasitopological group to be $H$-closed, which allowed us to solve a problem by Arhangel'skii and Choban \cite{Arhangelskii-Choban-2009} and show that a topological group $G$ is $H$-closed in the class of quasitopological groups if and only if $G$ is Ra\v{\i}kov-complete. In \cite{Gutik-2014} it is proved that a topological group $G$ is $H$-closed in the class of semitopological inverse semigroups with continuous inversion if and only if $G$ is compact.

In \cite{Stepp-1975} Stepp studied $H$-closed topological
semilattices in the class of topological semigroups. There he proved
that an algebraic semilattice $E$ is algebraically $h$-complete in
the class of topological semilattices if and only if every chain in
$E$ is finite. In \cite{Gutik-Repovs-2008} Gutik and Repov\v{s}
studied the closure of a linearly ordered topological
semilattice in a topological semilattice. They obtained a characterization of $H$-closed linearly ordered topological semilattices in
the class of topological semilattices and showed that every
$H$-closed linear topological semilattice is absolutely $H$-closed in the
class of topological semilattices. Such semilattices were studied also
in \cite{Chuchman-Gutik-2007, Gutik-Pagon-Repovs-2010}. In
\cite{Bardyla-Gutik-2012} the closures of the discrete
semilattices $(\mathbb{N},\min)$ and $(\mathbb{N},\max)$ were
described. In that paper the authors constructed an example of an $H$-closed
topological semilattice in the class of topological semilattices,
which is not absolutely $H$-closed in the class of topological
semilattices. The constructed example gives a negative answer to
Question~17 from \cite{Stepp-1975}. $H$-closed and absolutely
$H$-closed (semi)topological semigroups and their extensions in
different classes of topological and semitopological semigroups were
studied in \cite{Berezovski-Gutik-Pavlyk-2010, Gutik-2014,
Gutik-Lawson-Repovs-2009, Gutik-Pavlyk-2001, Gutik-Pavlyk-2003,
Gutik-Pavlyk-2005, Gutik-Pavlyk-Reiter-2009, Gutik-Reiter-2009,
Gutik-Reiter-2010}

In \cite{Bardyla-Gutik-2016-?} we showed that the $\lambda$-polycyclic monoid for in infinite cardinal $\lambda\geqslant 2$ has similar algebraic properties to that of the polycyclic monoid $P_n$ with finitely many $n\geqslant 2$ generators. In particular we proved that for every infinite cardinal $\lambda$ the polycyclic monoid $P_{\lambda}$ is a congruence-free, combinatorial, $0$-bisimple, $0$-$E$-unitary, inverse semigroup. Also we showed that every non-zero element $x\in P_\lambda$ is an isolated point in $(P_{\lambda},\tau)$ for every Hausdorff topology on $P_{\lambda}$, such that $P_{\lambda}$ is a semitopological semigroup; moreover, every locally compact Hausdorff semigroup topology on $P_\lambda$ is discrete. The last statement extends results of the paper \cite{Mesyan-Mitchell-Morayne-Peresse-20??} treating topological inverse graph semigroups. We described all feebly compact topologies $\tau$ on $P_{\lambda}$ such that $\left(P_{\lambda},\tau\right)$ is a semitopological semigroup.
 Also in \cite{Bardyla-Gutik-2016-?} we proved that for every cardinal $\lambda\geqslant 2$ any continuous homomorphism from a topological semigroup $P_\lambda$ into an arbitrary countably compact topological semigroup is annihilating and there exists no  Hausdorff feebly compact topological semigroup containing $P_{\lambda}$ as a dense subsemigroup.

This paper is a continuation of \cite{Bardyla-Gutik-2016-?}. In this paper we give sufficient conditions on a topological inverse $\lambda$-polycyclic monoid $P_{\lambda}$ to be absolutely $H$-closed in the class of topological inverse semigroups. For every infinite cardinal $\lambda$ we construct the coarsest semigroup inverse topology $\tau_{mi}$ on $P_\lambda$ and give an example of a topological inverse monoid S which contains the polycyclic
monoid $P_2$ as a dense discrete subsemigroup.

\medskip

It is well known that for an arbitrary topological inverse semigroup $S$ and every element $x\in S$ the continuity of the semigroup operation and the inversion in $S$ implies that any $\mathscr{L}$-class $L_x$ and any $\mathscr{R}$-class $R_x$ which contain the element $x$ are closed subsets in $S$. Indeed, the Wagner--Preston Theorem (see Theorem~1.17 from \cite{Clifford-Preston-1961-1967}) implies that $L_x=L_{x^{-1}x}$ and $R_x=R_{xx^{-1}}$ for arbitrary $x\in S$ and since the maps $\varphi\colon S\rightarrow E(S)$ and $\psi\colon S\rightarrow E(S)$ defined by the formulae
\begin{equation*}
    (x)\varphi=xx^{-1} \qquad \hbox{~and~} \qquad (x)\psi=x^{-1}x
\end{equation*}
are continuous, we get that $L_x=(x^{-1}x)\psi^{-1}$ and
$R_x=(xx^{-1})\varphi^{-1}$ are closed subsets of the topological
semi\-group $S$. This implies that for any idempotents $e$ and $f$
of a topological inverse semigroup $S$ the following
$\mathscr{H}$-classes of $S$:
\begin{equation*}
    H_e=R_e\cap L_e \qquad \hbox{~and~} \qquad  H_{e,f}=R_e\cap L_f
\end{equation*}
are closed subsets of the topological inverse semi\-group $S$ too.
Moreover, the relations $\mathscr{L}$, $\mathscr{R}$ and
$\mathscr{H}$ are closed subsets in $S\times S$, but $\mathscr{D}$
and $\mathscr{J}$ are not necessary closed subsets in $S\times S$
for an arbitrary topological inverse semigroup $S$ (see
\cite[Section~II]{Eberhart-Selden-1969}).

The following proposition describes $\mathscr{D}$-equivalent $\mathscr{H}$-classes in an arbitrary topological inverse semigroup.

\begin{proposition}\label{proposition-5.1}
Let $S$ be a Hausdorff topological inverse semigroup and $a,c$ be $\mathscr{D}$-equivalent elements of $S$. Then there exists $b\in S$ such that $a\mathscr{R}b$ and $b\mathscr{L}c$ in $S$, and hence $as=b$, $bs^{\prime}=a$, $tb=c$, $t^{\prime}c=b$, for some $s,s^{\prime},t,t^{\prime}\in S$. The mappings $\mathfrak{f}_{a,c}\colon H_a\rightarrow H_c\colon x\mapsto txs$ and $\mathfrak{f}_{c,a}\colon H_c\rightarrow H_a\colon x\mapsto t^{\prime}xs^{\prime}$ are continuous and mutually inverse, and hence are homeomorphisms of closed subspaces $H_a$ and $H_c$ of the topological space $S$. Moreover, if $H_a$ and $H_c$ are subgroups of $S$ then $H_a$ and $H_c$ are topologically isomorphic closed topological subgroups in the topological inverse semigroup $S$.
\end{proposition}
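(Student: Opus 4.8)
The plan is to run everything through the classical Green's Lemma, the one extra ingredient being that each bijection which appears is the restriction of a translation and hence is automatically continuous in a topological (inverse) semigroup. First I would produce $b$ together with the four elements inside $S$ (not merely in $S^{1}$). Since $\mathscr{D}=\mathscr{R}\circ\mathscr{L}$, from $a\,\mathscr{D}\,c$ we get $b\in S$ with $a\,\mathscr{R}\,b$ and $b\,\mathscr{L}\,c$. Recalling that in an inverse semigroup $x\,\mathscr{R}\,y$ is equivalent to $xx^{-1}=yy^{-1}$ and $x\,\mathscr{L}\,y$ to $x^{-1}x=y^{-1}y$, set $s=a^{-1}b$, $s^{\prime}=b^{-1}a$, $t=cb^{-1}$, $t^{\prime}=bc^{-1}$; using $aa^{-1}=bb^{-1}$ and $b^{-1}b=c^{-1}c$ a one-line computation gives $as=b$, $bs^{\prime}=a$, $tb=c$, $t^{\prime}c=b$, all lying in $S$.

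The algebraic core is Green's Lemma \cite{Clifford-Preston-1961-1967}. For $x\in L_a$, writing $x=ua$ with $u\in S^{1}$ yields $xss^{\prime}=u(ass^{\prime})=u(bs^{\prime})=x$, and dually $t^{\prime}ty=y$ for every $y\in R_b$. From the first identity the right translation $\rho_s\colon x\mapsto xs$ maps $L_a$ bijectively onto $L_b$ with inverse $\rho_{s^{\prime}}$ and satisfies $x\,\mathscr{R}\,xs$; since $R_a=R_b$ it therefore restricts to a bijection $H_a\to H_b$, and dually the left translation $\lambda_t\colon y\mapsto ty$ restricts to a bijection $H_b\to H_c$ with inverse $\lambda_{t^{\prime}}$. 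Hence $\mathfrak{f}_{a,c}=\lambda_t\circ\rho_s\colon x\mapsto txs$ is a bijection $H_a\to H_c$, and the two displayed identities show that $\mathfrak{f}_{c,a}\colon x\mapsto t^{\prime}xs^{\prime}$ is its two-sided inverse. Now $\mathfrak{f}_{a,c}$ and $\mathfrak{f}_{c,a}$ are the restrictions to $H_a$, resp. $H_c$, of the continuous self-maps $x\mapsto txs$ and $x\mapsto t^{\prime}xs^{\prime}$ of $S$ (each a composition of a left and a right translation), so as mutually inverse continuous bijections they are homeomorphisms; moreover $H_a=H_{aa^{-1},a^{-1}a}$ and $H_c=H_{cc^{-1},c^{-1}c}$ are closed subsets of $S$ by the remarks preceding the statement.

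For the last assertion, assume $H_a$ and $H_c$ are subgroups. The identity of $H_a$ is an idempotent of $R_a$ and of $L_a$, hence equals $aa^{-1}=a^{-1}a=:e$ (the unique idempotent of each of these classes); similarly $cc^{-1}=c^{-1}c=:f$ is the identity of $H_c$, and from $a\,\mathscr{R}\,b\,\mathscr{L}\,c$ we obtain $bb^{-1}=e$ and $b^{-1}b=f$. Since $\mathfrak{f}_{a,c}$ is in general not multiplicative, I would switch to the map $g\colon H_a\to H_c$, $x\mapsto b^{-1}xb$: using that $bb^{-1}=e$ is the identity of the group $H_a$ one gets $g(x)g(y)=b^{-1}x(bb^{-1})yb=b^{-1}(xy)b=g(xy)$, one checks that $g$ takes its values in $H_c$, and, using $b^{-1}b=f$, that $x\mapsto bxb^{-1}$ is a two-sided inverse of $g$; all four of these maps are continuous restrictions of translations. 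Thus $g$ is a topological group isomorphism, and by the previous paragraph $H_a$ and $H_c$ are closed, so they are topologically isomorphic closed topological subgroups of $S$.

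The only mildly delicate point — the thing one must get right rather than a genuine obstacle — is this last step: the Green's-Lemma bijection $\mathfrak{f}_{a,c}$ need not be a homomorphism even when the $\mathscr{H}$-classes are groups, so one first identifies the idempotents $bb^{-1}=e$ and $b^{-1}b=f$ and then passes to the conjugation $b^{-1}(\cdot)b$; the rest is a transcription of Green's Lemma combined with the automatic continuity of translations in a topological semigroup.
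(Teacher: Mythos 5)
Your proof is correct and takes essentially the same route as the paper: Green's Lemma yields the mutually inverse bijections $x\mapsto txs$ and $x\mapsto t^{\prime}xs^{\prime}$, automatic continuity of translations upgrades them to homeomorphisms between the closed $\mathscr{H}$-classes, and in the group case a map of the form $x\mapsto uxu^{\prime}$ (your conjugation $x\mapsto b^{-1}xb$, which is exactly the choice coming from the proof of Higgins' Theorem~1.2.7 cited by the paper) gives the topological isomorphism. The only difference is presentational: you make $s,s^{\prime},t,t^{\prime}$ and $u,u^{\prime}$ explicit via the inverse-semigroup identities $aa^{-1}=bb^{-1}$, $b^{-1}b=c^{-1}c$, whereas the paper simply cites Clifford--Preston and Higgins for these facts.
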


\begin{proof}
The above arguments imply that $H_a$ and $H_c$ are closed subspaces of $S$. Also, the algebraic part of the statement of our theorem follows from Theorem~2.3 of \cite{Clifford-Preston-1961-1967} and Theorem~1.2.7 from \cite{Higgins-1992}. The continuity of the semigroup operation in $S$ implies that the maps $\mathfrak{f}_{a,c}\colon H_a\rightarrow H_c$ and $\mathfrak{f}_{c,a}\colon H_c\rightarrow H_a$ are continuous and hence are homeomorphisms. Now, the proof of Theorem~1.2.7 from \cite{Higgins-1992} implies that in the case when $H_a$ and $H_c$ are subgroups of $S$, then there exist $u,u^{\prime}\in S$ such that the maps $\mathfrak{f}_{a,c}\colon H_a\rightarrow H_c\colon x\mapsto uxu^{\prime}$ and $\mathfrak{f}_{c,a}\colon H_c\rightarrow H_a\colon x\mapsto u^{\prime}xu$ are mutually inverse isomorphisms and the continuity of the semigroup operation in $S$ implies that so defined maps are topological isomorphisms.
\end{proof}

\begin{remark}\label{remark-5.2}
The proof of Proposition~\ref{proposition-5.1} implies that any two $\mathscr{D}$-equivalent $\mathscr{H}$-classes of a Hausdorff semitopological semigroup $S$ are homeomorphic subspaces in $S$, but they are not necessary closed subspaces in $S$, and a similar statement holds for maximal subgroups in $S$ (see \cite{Gutik-2014}).
\end{remark}

\begin{lemma}\label{lemma-5.3}
Let $T$ and $S$ be a Hausdorff topological inverse semigroup such that $S$ is an inverse subsemigroup of $T$. Let $G$ be an $\mathscr{H}$-class in $S$ which is a closed subset of the topological inverse semigroup $T$ and $D_G$ be a $\mathscr{D}$-class of the semigroup $S$ which contains the set $G$. Then every $\mathscr{H}$-class $H\subseteq D_G$ of the semigroup $S$ is a closed subset of the topological space $T$.
\end{lemma}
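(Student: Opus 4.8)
The plan is to transport the closedness of $G$ to $H$ along the homeomorphisms supplied by Proposition~\ref{proposition-5.1}, but realized as self-maps of the larger semigroup $T$. Fix $a\in G$ and $c\in H$; since $G$ and $H$ both lie in the $\mathscr{D}$-class $D_G$ of $S$, the elements $a$ and $c$ are $\mathscr{D}$-equivalent in $S$. Applying Proposition~\ref{proposition-5.1} to the topological inverse semigroup $S$ produces $b\in S$ and elements $s,s^{\prime},t,t^{\prime}\in S$ with $as=b$, $bs^{\prime}=a$, $tb=c$, $t^{\prime}c=b$, such that $\mathfrak{f}_{a,c}\colon G\to H$, $x\mapsto txs$, and $\mathfrak{f}_{c,a}\colon H\to G$, $x\mapsto t^{\prime}xs^{\prime}$, are mutually inverse homeomorphisms; here we used that $H_a=G$ and $H_c=H$ because $a\in G$ and $c\in H$.

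Next I would extend these two formulas to continuous self-maps of $T$. Since $s,s^{\prime},t,t^{\prime}\in S\subseteq T$ and the semigroup operation of $T$ is continuous, the maps $\Phi\colon T\to T$, $x\mapsto txs$, and $\Psi\colon T\to T$, $x\mapsto t^{\prime}xs^{\prime}$, are continuous, and by construction $\Phi|_{G}=\mathfrak{f}_{a,c}$ and $\Psi|_{H}=\mathfrak{f}_{c,a}$. In particular $\Phi(G)\subseteq H$, $\Psi(H)\subseteq G$, and $\Phi(\Psi(x))=x$ for every $x\in H$, because $\mathfrak{f}_{a,c}\circ\mathfrak{f}_{c,a}=\operatorname{id}_{H}$.

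The key step is then the set-theoretic identity
\begin{equation*}
H=\Psi^{-1}(G)\cap\{x\in T\mid \Phi(\Psi(x))=x\}.
\end{equation*}
The inclusion ``$\subseteq$'' is exactly $\Psi(H)\subseteq G$ together with $\Phi\circ\Psi=\operatorname{id}_{H}$ on $H$, both noted above. For ``$\supseteq$'', if $x$ belongs to the right-hand side, set $y=\Psi(x)\in G$; then $x=\Phi(\Psi(x))=\Phi(y)\in\Phi(G)\subseteq H$. Granting this identity, $H$ is closed in $T$: the set $\Psi^{-1}(G)$ is closed because $G$ is closed in $T$ and $\Psi$ is continuous, whereas $\{x\in T\mid \Phi(\Psi(x))=x\}$ is the preimage of the diagonal of $T\times T$ under the continuous map $x\mapsto(\Phi(\Psi(x)),x)$ and hence closed, the diagonal being closed since $T$ is Hausdorff. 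An intersection of closed sets is closed, so $H$ is a closed subset of $T$.

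The only genuine subtlety — the ``hard part'' — is recognizing that one should not intersect preimages with $S$ (which need not be closed in $T$), but instead cut $\Psi^{-1}(G)$ down by the equalizer of $\Phi\circ\Psi$ and the identity map of $T$; after that, verifying the displayed identity is a short check, and everything else is immediate from Proposition~\ref{proposition-5.1} and the continuity of multiplication in $T$.
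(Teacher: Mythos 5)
Your argument is correct, and it takes a genuinely different, and considerably shorter, route than the paper's. The paper proves the lemma by a case analysis on idempotents: when $G$ and $H$ both contain idempotents it transports closedness through the conjugation maps $x\mapsto a^{-1}xa$ and $x\mapsto axa^{-1}$ between the maximal subgroups $H_e^T$ and $H_f^T$ of $T$, and the idempotent-free cases are then settled by contradiction, taking an accumulation point of $H$ and invoking Proposition~\ref{proposition-5.1} together with Theorem~1.2.7 of Higgins and Proposition~3.2.11 of Lawson to reduce to the subgroup case. You bypass all of this: from Proposition~\ref{proposition-5.1} you use only its algebraic content (Green's lemma), namely that $x\mapsto txs$ and $x\mapsto t^{\prime}xs^{\prime}$ with $s,s^{\prime},t,t^{\prime}\in S$ are mutually inverse bijections between $G=H_a$ and $H=H_c$, and then you realize these formulas as continuous self-maps $\Phi,\Psi$ of $T$ and exhibit $H=\Psi^{-1}(G)\cap\{x\in T\mid \Phi(\Psi(x))=x\}$ as the intersection of the preimage of the closed set $G$ with an equalizer, which is closed because $T$ is Hausdorff; your verification of this identity is complete (the inclusion $\supseteq$ correctly rests on $\Phi(G)\subseteq H$, which holds since $\Phi|_{G}=\mathfrak{f}_{a,c}$). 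What each approach buys: yours is uniform in $G$ and $H$, needs no maximal-subgroup structure and no contradiction argument, and in fact uses only continuity of multiplication in $T$ and the Hausdorff property (continuity of inversion in $T$ is never invoked), so it proves the statement even when $T$ is merely a Hausdorff topological semigroup containing the inverse semigroup $S$; the paper's longer route makes explicit the conjugation isomorphisms between $\mathscr{D}$-equivalent maximal subgroups of $T$, in the spirit of the last assertion of Proposition~\ref{proposition-5.1}, but that extra structure is not needed for the lemma itself.
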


\begin{proof}
First we consider the case when $G$ has an idempotent, i.e., $G$ is a maximal subgroup of the semigroup $S$ (see Theorem~2.16 of \cite{Clifford-Preston-1961-1967}).

In the case when the $\mathscr{H}$-class $H$ contains an idempotent, Theorem~2.16 in  \cite{Clifford-Preston-1961-1967} implies that $H$ is a maximal subgroup of $S$ and hence $H$ is a subgroup of topological inverse semigroup $T$. We put $e$ and $f$ are unit elements of the groups $G$ and $H$, respectively. Since the idempotents $e$ and $f$ are $\mathscr{D}$-equivalent in $S$, Proposition~3.2.5 of~\cite{Lawson-1998} implies that there exists $a\in S$ such that $aa^{-1}=e$ and $a^{-1}a=f$. Now by Proposition~3.2.11(5) of~\cite{Lawson-1998} the idempotents $e$ and $f$ are $\mathscr{D}$-equivalent in the semigroup $T$. Put $H_e^T$ and $H_f^T$ be the $\mathscr{H}$-classes of idempotents $e$ and $f$ in the semigroup $T$, respectively. We define the maps $\mathfrak{f}_{e,f}\colon T\rightarrow T$ and $\mathfrak{f}_{f,e}\colon T\rightarrow T$ by the formulae $(x)\mathfrak{f}_{e,f}=a^{-1}xa$ and $(x)\mathfrak{f}_{f,e}=axa^{-1}$, respectively. Then for any $s\in H_e^T$ and $t\in H_f^T$ we get the equalities
\begin{equation*}
\begin{split}
& (s)\mathfrak{f}_{e,f}\big((s)\mathfrak{f}_{e,f}\big)^{-1}=a^{-1}sa (a^{-1}sa)^{-1}= a^{-1}saa^{-1}s^{-1}a=a^{-1}ses^{-1}a=a^{-1}ss^{-1}a=a^{-1}ea=a^{-1}a=f, \\
& \big((s)\mathfrak{f}_{e,f}\big)^{-1}(s)\mathfrak{f}_{e,f}=(a^{-1}sa)^{-1}a^{-1}sa= a^{-1}s^{-1}aa^{-1}sa= a^{-1}s^{-1}esa=a^{-1}s^{-1}sa=a^{-1}ea=a^{-1}a=f, \\
& (t)\mathfrak{f}_{f,e}\big((t)\mathfrak{f}_{f,e}\big)^{-1}=ata^{-1}(ata^{-1})^{-1}= ata^{-1}at^{-1}a^{-1}=atft^{-1}a^{-1}=att^{-1}a^{-1}=afa^{-1}=aa^{-1}=e, \\
&\big((t)\mathfrak{f}_{f,e}\big)^{-1}(t)\mathfrak{f}_{f,e}=(ata^{-1})^{-1}ata^{-1}= at^{-1}a^{-1}ata^{-1}= at^{-1}fta^{-1}=at^{-1}ta^{-1}=afa^{-1}=aa^{-1}=e,\\
& \big((s)\mathfrak{f}_{e,f}\big)\mathfrak{f}_{f,e}=aa^{-1}saa^{-1}=ese=s,\\
& \big((t)\mathfrak{f}_{f,e}\big)\mathfrak{f}_{e,f}=a^{-1}ata^{-1}a=ftf=t,
\end{split}
\end{equation*}
because $aa^{-1}=ss^{-1}=s^{-1}s=e$, $ea=a$, $af=a$ and $a^{-1}a=tt^{-1}=t^{-1}=f$. Similarly, for arbitrary $s,v\in H_e^T$ and $t,u\in H_f^T$ we have that
\begin{equation*}
    (s)\mathfrak{f}_{e,f}(v)\mathfrak{f}_{e,f}=a^{-1}saa^{-1}va=a^{-1}seva=a^{-1}sva=(sv)\mathfrak{f}_{e,f}
\end{equation*}
and
\begin{equation*}
    (t)\mathfrak{f}_{f,e}(u)\mathfrak{f}_{f,e}=ata^{-1}aua^{-1}=atfua^{-1}=atua^{-1}=(tu)\mathfrak{f}_{f,e}.
\end{equation*}
\vskip3pt

\noindent Hence the restrictions $\mathfrak{f}_{e,f}|_{H_e^T}\colon H_e^T\rightarrow H_f^T$ and $\mathfrak{f}_{f,e}|_{H_f^T}\colon H_f^T\rightarrow H_e^T$ are mutually invertible group isomorphisms. Also, since $a\in S$ we get that the restrictions $\mathfrak{f}_{e,f}|_{G}\colon G\rightarrow H$ and $\mathfrak{f}_{f,e}|_{H}\colon H\rightarrow G$ are mutually invertible group isomorphisms too. This and the continuity of left and right translations in $T$ imply that $H$ is a closed subgroup of the topological inverse semigroup $T$.

Next we consider the case when the $\mathscr{H}$-class $H$ contains no idempotents. Then there exists distinct idempotents $e,f\in S$ such that $ss^{-1}=e$ and $s^{-1}s=f$ for all $s\in H$. Suppose to the contrary that $H$ is not a closed subset of the topological inverse semigroup $T$. Then there exists an accumulation point $x\in T\setminus H$ of the set $H$ in the topological space $T$. Since every $\mathscr{H}$-class of a topological inverse semigroup $T$ is a closed subset of $T$ we get that $H$ and $x$ are contained in a same $\mathscr{H}$-class $H_x$ of the semigroup $T$. Then $xx^{-1}=e$ and $x^{-1}x=f$. Now the $\mathscr{H}$-class $H_e^T$ in $T$ which contains the idempotent $e\in S$ is a topological subgroup of the topological inverse semigroup $T$ and by Proposition~\ref{proposition-5.1} the subspace $H_e^T$ of the topological space $T$ is homeomorphic to the subspace $H_x$ of $T$. Moreover, Theorem~1.2.7 from \cite{Higgins-1992} implies that there exists a homeomorphism $\mathfrak{f}\colon H_x\rightarrow H_e^T$ such that the image $(H)\mathfrak{f}$ is a topological subgroup of the topological inverse semigroup $T$ and $(H)\mathfrak{f}$ is topologically isomorphic to the topological group $G$. Then $(H)\mathfrak{f}$ is not a closed subgroup of $T$ which contradicts our above part of the proof.

Assume that $G$ has no idempotents. By the previous part of the proof it suffices to show that there exists a maximal subgroup $H_e$ with an idempotent $e$ in the $\mathscr{D}$-class $D_G$ such that $H_e$ is a closed subgroup of topological semigroup $T$. Suppose to the contrary that every maximal subgroup in the $\mathscr{D}$-class $D_G$ is not a closed in $T$. Fix and arbitrary subgroup $H_e$ with an idempotent $e$ in the $\mathscr{D}$-class $D_G$ such that $xx^{-1}=e$ for all $x\in G$. Then Proposition~3.2.11(3) of~\cite{Lawson-1998} implies that there exist $\mathscr{H}$-classes $H_G^T$ and $H_e^T$ in the semigroup $T$ which contain the set $G$ and group $H_e$. Since in the topological semigroup $T$ every $\mathscr{H}$-class is a closed subset in $T$, we have that $G$ is a closed subset of the space $H_G^T$ and $H_e$ is not a closed subgroup of the topological group $H_e^T$. Then Proposition~3.2.11  of~\cite{Lawson-1998} and Proposition~\ref{proposition-5.1} imply that there exist $s,s^{\prime},t,t^{\prime}\in S$  such that the maps $\mathfrak{f}_{e}\colon H_e^T\rightarrow H_G^T\colon x\mapsto txs$ and $\mathfrak{f}_{G}\colon H_G^T\rightarrow H_e^T\colon x\mapsto t^{\prime}xs^{\prime}$ are mutually invertible homeomorphisms of the topological spaces $H_e^T$ and $H_G^T$ such that the restrictions $\mathfrak{f}_{e}|_{H_e}\colon H_e^T\rightarrow G$ and $\mathfrak{f}_{G}|_{G}\colon G\rightarrow H_e$ are mutually invertible homeomorphisms. This is a contradiction, because $H_e$ is not a closed subset of $H_e^T$. This completes the proof of the lemma.
\end{proof}

Lemma~\ref{lemma-5.3} implies the following corollary.

\begin{corollary}\label{corollary-5.4}
Let $T$ and $S$ be a Hausdorff topological inverse semigroup such
that $S$ is an inverse subsemigroup of $T$. Let $G$ be a maximal
subgroup in $S$ which is $H$-closed in the class of topological
inverse semigroups and $D_G$ be a $\mathscr{D}$-class of the
semigroup $S$ which contains the group $G$. Then every
$\mathscr{H}$-class $H\subseteq D_G$ of the semigroup $S$ is a
closed subset of the topological space $T$.
\end{corollary}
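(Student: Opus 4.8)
The plan is to reduce the statement directly to Lemma~\ref{lemma-5.3} by verifying its hypotheses. Recall that a maximal subgroup of a semigroup is precisely the $\mathscr{H}$-class $H_e$ of some idempotent $e$ (Theorem~2.16 of~\cite{Clifford-Preston-1961-1967}), so the given group $G$ is in particular an $\mathscr{H}$-class of $S$. Consequently, once we know that $G$ is a closed subset of the topological inverse semigroup $T$, Lemma~\ref{lemma-5.3} applied to $G$ and to the $\mathscr{D}$-class $D_G$ immediately yields that every $\mathscr{H}$-class $H\subseteq D_G$ of $S$ is closed in $T$, which is exactly the assertion.

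So the one point to establish is that $G$ is closed in $T$. First I would observe that since $S$ is an inverse subsemigroup of the topological inverse semigroup $T$, it carries the subspace topology, and hence so does the subgroup $G\subseteq S$. Thus $T$ is a topological inverse semigroup which contains $G$ both as an inverse subsemigroup and as a topological subspace. Since $G$ is assumed to be $H$-closed in the class of topological inverse semigroups, the very definition of $H$-closedness forces $G$ to be a closed subsemigroup of $T$; in particular $G$ is a closed subset of $T$.

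Having established this, $G$ is an $\mathscr{H}$-class of $S$ that is closed in $T$, and $D_G$ is the $\mathscr{D}$-class of $S$ containing $G$, so the hypotheses of Lemma~\ref{lemma-5.3} are satisfied verbatim, and the lemma delivers the conclusion.

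I do not expect any genuine obstacle here: the content is packed into Lemma~\ref{lemma-5.3}, and the corollary is essentially a repackaging of it. The only items requiring a word of care are the bookkeeping that $G$ inherits the subspace topology from $T$ (so that the hypothesis ``$G$ is $H$-closed in the class of topological inverse semigroups'' is genuinely applicable to the inclusion $G\subseteq T$) and the standard identification of the maximal subgroup $G$ with an $\mathscr{H}$-class of $S$.
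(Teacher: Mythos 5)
Your argument is correct and is exactly the route the paper intends: the paper derives Corollary~\ref{corollary-5.4} simply by applying Lemma~\ref{lemma-5.3}, and your two supporting observations (that $H$-closedness of $G$ in the class of topological inverse semigroups makes $G$ closed in $T$, and that a maximal subgroup is the $\mathscr{H}$-class of its idempotent) are precisely the implicit bookkeeping needed for that application.
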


\begin{lemma}\label{lemma-5.5}
Let $S$ be a Hausdorff topological inverse semigroup such the following conditions hold:
\begin{itemize}
  \item[$(i)$] every maximal subgroup of the semigroup $S$ is $H$-closed in the class topological groups;
  \item[$(ii)$] all non-minimal elements of the semilattice $E(S)$ are isolated points in $E(S)$.
\end{itemize}
If there exists a topological inverse semigroup $T$ such that $S$ is
a dense subsemigroup of $T$ and $T\setminus S\neq\varnothing$ then
for every $x\in T\setminus S$ at least one of the points $x\cdot
x^{-1}$ or $x^{-1}\cdot x$ belongs to $T\setminus S$.
\end{lemma}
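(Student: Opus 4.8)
The plan is to argue by contradiction: assume there is $x\in T\setminus S$ with $e:=xx^{-1}\in S$ and $f:=x^{-1}x\in S$ (so $e,f\in E(S)$ and $ex=xf=x$), and derive a contradiction with Lemma~\ref{lemma-5.3}. The starting observation is that the map $\mu\colon T\to T$, $\mu(y)=eyf$, is continuous, fixes $x$, and sends $S$ into $eSf\subseteq S$; hence $x\in\operatorname{cl}_T(eSf)$. The heart of the proof is to sharpen this to $x\in\operatorname{cl}_T(H)$, where $H=R_e^S\cap L_f^S$ is a \emph{single} $\mathscr{H}$-class of $S$.

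For the sharpening I would use the elementary inequalities $(esf)(esf)^{-1}=esfs^{-1}e\leqslant e$ and $(esf)^{-1}(esf)=fs^{-1}esf\leqslant f$, valid for all $s\in S$, which say that every $y=esf\in eSf$ has $yy^{-1}\in{\downarrow}e\cap E(S)$ and $y^{-1}y\in{\downarrow}f\cap E(S)$. Here hypothesis~$(ii)$ enters: if $e$ is non-minimal in $E(S)$ it is isolated in $E(S)$, and if $e$ is minimal then ${\downarrow}e\cap E(S)=\{e\}$, so in either case $e$ is isolated in the subspace ${\downarrow}e\cap E(S)$; likewise for $f$. Choosing open sets $N,M\subseteq T$ with $N\cap({\downarrow}e\cap E(S))=\{e\}$ and $M\cap({\downarrow}f\cap E(S))=\{f\}$ and setting $W=\varphi^{-1}(N)\cap\psi^{-1}(M)$ (with $\varphi(y)=yy^{-1}$, $\psi(y)=y^{-1}y$ continuous), $W$ is an open neighbourhood of $x$ and $eSf\cap W\subseteq R_e^S\cap L_f^S=H$. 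Since every neighbourhood of $x$ meets $eSf$, and therefore meets $eSf\cap W$, this yields $H\neq\varnothing$ (so $e\,\mathscr{D}\,f$ in $S$ and $H\subseteq D_e^S$) and $x\in\operatorname{cl}_T(eSf\cap W)\subseteq\operatorname{cl}_T(H)$.

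It then remains to check that $H$ is closed in $T$, which contradicts $x\in\operatorname{cl}_T(H)\setminus H$. By~$(i)$ the maximal subgroup $H_e^S$ of $S$ is $H$-closed in the class of topological groups; since the $\mathscr{H}$-class $H_e^T$ of $e$ in $T$ is a closed subset of $T$ (by the remarks preceding Proposition~\ref{proposition-5.1}) and is a topological group under the subspace topology and the restricted inversion, and $H_e^S=H_e^T\cap S$ is a topological subgroup of it, $H_e^S$ must be closed in $H_e^T$, hence in $T$. Thus $H_e^S$ is an $\mathscr{H}$-class of $S$ which is closed in $T$ and $f\in D_{H_e^S}=D_e^S$, so Lemma~\ref{lemma-5.3} gives that the $\mathscr{H}$-class $H\subseteq D_e^S$ is closed in $T$ — the desired contradiction. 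Hence no such $x$ exists.

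The step I expect to be the main obstacle is the sharpening $x\in\operatorname{cl}_T(H)$: hypothesis~$(ii)$ gives no information at minimal idempotents, so the argument has to be arranged to use instead the triviality of ${\downarrow}e\cap E(S)$ for minimal $e$, together with the fact that $eSf$ only produces idempotents below $e$ and $f$. Everything else — the inequalities above, the fact that a non-empty $R_e^S\cap L_f^S$ is exactly one $\mathscr{H}$-class contained in $D_e^S$, and that a topological group which is $H$-closed in the class of topological groups is closed in any ambient topological group — should be routine.
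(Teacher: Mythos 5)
Your argument is correct, but it follows a genuinely different route from the paper's. The paper's proof distinguishes two cases according to whether $E(S)$ has a smallest idempotent: when it does not, $E(S)$ is discrete, every neighbourhood $U(x)$ of $x\in T\setminus S$ must meet infinitely many $\mathscr{H}$-classes of $S$ (each closed in $T$ via Corollary~\ref{corollary-5.4}), whence one of the sets $\left(U(x)\left(U(x)\right)^{-1}\right)\cap E(T)$ or $\left(\left(U(x)\right)^{-1}U(x)\right)\cap E(T)$ is infinite and so $xx^{-1}$ or $x^{-1}x$ is a non-isolated point of $E(T)$, hence outside $E(S)$; the case of a least idempotent $e$ is then treated separately, using that condition $(i)$ makes the maximal subgroup $H_e$ Raikov complete and therefore closed, so that $x$ cannot satisfy $xx^{-1}=e$ or $x^{-1}x=e$. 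You instead argue by contradiction and trap $x$ in the closure of a \emph{single} $\mathscr{H}$-class $H=R_e^S\cap L_f^S$: the inequalities $(esf)(esf)^{-1}\leqslant e$ and $(esf)^{-1}(esf)\leqslant f$ together with condition $(ii)$ (and the triviality of ${\downarrow}e\cap E(S)$ when $e$ is minimal) cut $eSf$ down to $H$ inside the neighbourhood $W=\varphi^{-1}(N)\cap\psi^{-1}(M)$, and then condition $(i)$ plus Lemma~\ref{lemma-5.3} — after the correct bridging observation that an $H$-closed topological group is closed in the closed maximal subgroup $H_e^T$ of $T$ — shows $H$ is closed in $T$, which is the desired contradiction. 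Your approach buys uniformity (no case split on the existence of a least idempotent, no counting of $\mathscr{H}$-classes or of idempotents in products of neighbourhoods) and it uses hypothesis $(i)$ more carefully than the paper's direct appeal to Corollary~\ref{corollary-5.4}, whose hypothesis is $H$-closedness in the class of topological inverse semigroups rather than of topological groups; what the paper's route gives in exchange is the quantitative picture that idempotents of $E(T)\setminus E(S)$ accumulate near such an $x$, which is the form in which the situation is exploited elsewhere in the paper.
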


\begin{proof}
First we consider the case when the topological semilattice $E(S)$ does not have the smallest element. Then the space $E(S)$ is discrete and Theorem~3.3.9 of~\cite{Engelking-1989} implies that $E(S)$ is an open subset of the topological space $E(T)$ and hence every point of the set $E(S)$ is isolated in $E(T)$. Also by Proposition~II.3 \cite{Eberhart-Selden-1969} we have that $\operatorname{cl}_{T}(E(S))=\operatorname{cl}_{E(T)}(E(S))$ and hence the points of the set $E(T)\setminus E(S)$ are not isolated in the space $E(T)$.

Fix an arbitrary point $x\in T\setminus S$. By Corollary~\ref{corollary-5.4} every $\mathscr{H}$-class is a closed subset of the topological inverse semigroup $T$. Since $x$ is an accumulation point of the set $S$ in the topological space $T$ we have that every open neighbourhood $U(x)$ of the point $x$ in $T$ intersects infinitely many $\mathscr{H}$-classes of the semigroup $S$. By Proposition~II.1 of~\cite{Eberhart-Selden-1969} the inversion on $T$ is a homeomorphism of the topological space $T$ and hence $\left(U(x)\right)^{-1}$ is an open neighbourhood of the point $x^{-1}$ in $T$ which intersects infinitely many $\mathscr{H}$-classes of the semigroup $S$. Then the continuity of the semigroup operations and the inversion in $T$ implies that at least one of the sets $\left(U(x)\left(U(x)\right)^{-1}\right)\cap E(T)$ or $\left(\left(U(x)\right)^{-1}U(x)\right)\cap E(T)$ is infinite for every open neighbourhood $U(x)$ of the point $x$ in the topological semigroup $T$. This implies that at least one of $x\cdot x^{-1}$ or $x^{-1}\cdot x$ is a non-isolated point in the topological space $E(T)$.

In the case when the semilattice $E(S)$ has a minimal idempotent the
presented above arguments imply that for arbitrary point $x\in
T\setminus S$ and every open neighbourhood $U(x)$ of the point $x$
in $T$ one of the sets $\left(U(x)\left(U(x)\right)^{-1}\right)\cap E(T)$
or $\left(\left(U(x)\right)^{-1}U(x)\right)\cap E(T)$ is infinite
for every open neighbourhood $U(x)$ of the point $x$ in the
topological semigroup $T$. Since $H_{e}$ is a minimal ideal of $S$ and
it is a Ra\u\i kov complete topological group. Then there exists an open
neighborhood $U(x)$ of $x$ in $T$, such that $U(x)\cap
H_{e}=\emptyset$. If $xx^{-1}=e$ or $x^{-1}x=e$ then $x=xx^{-1}x\in
H_{e}$, which contradicts that $x\in T\setminus S$. Hence $xx^{-1}\in T\setminus S$ or
$x^{-1}x\in T\setminus S$.
\end{proof}

Lemma~\ref{lemma-5.5} implies the following two corollaries.

\begin{corollary}\label{corollary-5.6}
Let $S$ be a Hausdorff topological inverse semigroup satisfying the following conditions:
\begin{itemize}
  \item[$(i)$] every maximal subgroup of the semigroup $S$ and the semilattice $E(S)$ are $H$-closed in the class of topological inverse semigroups;
  \item[$(ii)$] all non-minimal elements of the semilattice $E(S)$ are isolated points in $E(S)$.
\end{itemize}
Then $S$ is  $H$-closed in the class of topological inverse semigroups.
\end{corollary}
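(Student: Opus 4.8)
The plan is to reduce matters to Lemma~\ref{lemma-5.5} and then derive a contradiction from the $H$-closedness of the semilattice $E(S)$. Let $T$ be a Hausdorff topological inverse semigroup containing $S$ as a subsemigroup and as a topological subspace; I must show that $S$ is closed in $T$. Put $\bar{S}:=\operatorname{cl}_T(S)$. Since the multiplication of $T$ is continuous and its inversion is a homeomorphism, $\bar{S}$ is a subsemigroup of $T$ that is closed under the inversion of $T$, hence an inverse subsemigroup of $T$ and therefore a topological inverse semigroup in which $S$ is a dense inverse subsemigroup. It thus suffices to prove that $\bar{S}\setminus S=\varnothing$.

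First I would verify that the hypotheses of Corollary~\ref{corollary-5.6} yield the hypotheses of Lemma~\ref{lemma-5.5} for the pair $S\subseteq\bar{S}$. As the class of topological groups is contained in the class of topological inverse semigroups, a group which is $H$-closed in the larger class is $H$-closed in the smaller one; hence by condition~$(i)$ every maximal subgroup of $S$ is $H$-closed in the class of topological groups, which is condition~$(i)$ of Lemma~\ref{lemma-5.5}, while condition~$(ii)$ is literally the same. Therefore, if $\bar{S}\setminus S\neq\varnothing$, Lemma~\ref{lemma-5.5} supplies a point $x\in\bar{S}\setminus S$ with $xx^{-1}\in\bar{S}\setminus S$ or $x^{-1}x\in\bar{S}\setminus S$; since $E(S)=E(\bar{S})\cap S$, this means $E(\bar{S})\setminus E(S)\neq\varnothing$.

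The contradiction comes from the remaining half of condition~$(i)$. The set $E(\bar{S})$ is closed in the Hausdorff space $\bar{S}$, being the set of points where the identity map and the continuous retraction $\varphi\colon\bar{S}\to\bar{S}$, $x\mapsto xx^{-1}$, coincide; in particular $E(\bar{S})$ is a topological semilattice, hence a topological inverse semigroup, containing $E(S)$ as a subsemigroup and topological subspace, so the $H$-closedness of $E(S)$ forces $E(S)$ to be closed in $E(\bar{S})$. On the other hand $\varphi$ maps $\bar{S}$ onto $E(\bar{S})$ and sends the dense subset $S$ into $E(S)$ (indeed $ss^{-1}\in E(S)$ for every $s\in S$), so $E(\bar{S})=\varphi(\bar{S})\subseteq\operatorname{cl}_{\bar{S}}(\varphi(S))\subseteq\operatorname{cl}_{\bar{S}}(E(S))$, i.e.\ $E(S)$ is dense in $E(\bar{S})$. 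A dense closed subset coincides with the whole space, so $E(S)=E(\bar{S})$, contradicting $E(\bar{S})\setminus E(S)\neq\varnothing$. Hence $\bar{S}\setminus S=\varnothing$, so $S$ is closed in $T$, and — $T$ being arbitrary — $S$ is $H$-closed in the class of topological inverse semigroups.

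As to difficulty: the real content is already carried by Lemma~\ref{lemma-5.5} (and through it by Corollary~\ref{corollary-5.4} and Proposition~\ref{proposition-5.1}), so this proof is largely bookkeeping. The only step I would take pains over is the dual fact that $E(S)$ is at once dense and closed in $E(\bar{S})$; the remaining ingredients — that the closure of an inverse subsemigroup is an inverse subsemigroup, that $E(\bar{S})$ is closed in $\bar{S}$, and that $H$-closedness only gets stronger as the ambient class grows — are routine.
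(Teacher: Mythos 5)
Your proof is correct and follows exactly the route the paper intends: the paper states Corollary~\ref{corollary-5.6} as an immediate consequence of Lemma~\ref{lemma-5.5}, and your argument (pass to $\operatorname{cl}_T(S)$, use the lemma to produce an idempotent of $\operatorname{cl}_T(S)$ outside $S$, then contradict this via the fact that $E(S)$ is both dense and, by its $H$-closedness, closed in $E(\operatorname{cl}_T(S))$) is precisely the intended deduction. The bridging observation that $H$-closedness in the class of topological inverse semigroups implies $H$-closedness in the class of topological groups, needed to match hypothesis $(i)$ of the lemma, is also handled correctly.
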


\begin{corollary}\label{corollary-5.7}
Let $\lambda\geqslant 2$ and let $P_{\lambda}$ be a proper dense subsemigroup of a topological inverse semigroup $S$. Then either $xx^{-1}\in S\setminus P_{\lambda}$ or $x^{-1}x \in S\setminus P_{\lambda}$ for every $x\in S\setminus P_{\lambda}$.
\end{corollary}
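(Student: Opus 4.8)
The plan is to derive Corollary~\ref{corollary-5.7} as an application of Lemma~\ref{lemma-5.5}, so the first task is simply to verify that the $\lambda$-polycyclic monoid $P_{\lambda}$ satisfies hypotheses $(i)$ and $(ii)$ of that lemma. For $(i)$: since $P_{\lambda}$ is combinatorial (recalled in the introduction, from \cite{Bardyla-Gutik-2016-?}), every $\mathscr{H}$-class is a singleton, so every maximal subgroup of $P_{\lambda}$ is trivial, and a trivial (one-point) topological group is obviously $H$-closed in the class of topological groups. For $(ii)$: the idempotents of $P_{\lambda}$ are $0$ together with the elements $w w^{-1}$ where $w$ ranges over the free monoid on the generators $p_i$; the zero is the minimum of the semilattice $E(P_{\lambda})$, and it was shown in \cite{Bardyla-Gutik-2016-?} that every non-zero element of $P_{\lambda}$ is isolated in any Hausdorff semitopological semigroup topology on $P_{\lambda}$, in particular in $E(P_{\lambda})$ with the subspace topology. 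Hence every non-minimal element of $E(P_{\lambda})$ is isolated, which is exactly $(ii)$.

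Once these two verifications are in place, the corollary is almost immediate. I would argue as follows. Suppose $P_{\lambda}$ is a proper dense subsemigroup of a Hausdorff topological inverse semigroup $S$; then $S \setminus P_{\lambda} \neq \varnothing$, and $S$ together with $P_{\lambda}$ meets all the hypotheses of Lemma~\ref{lemma-5.5} (with $T = S$ and the semigroup of the lemma being $P_{\lambda}$). The lemma then yields directly that for every $x \in S \setminus P_{\lambda}$ at least one of $x x^{-1}$ or $x^{-1} x$ lies in $S \setminus P_{\lambda}$, which is precisely the assertion of Corollary~\ref{corollary-5.7}.

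I do not expect any serious obstacle here: the content of the corollary is entirely contained in Lemma~\ref{lemma-5.5}, and the only thing to do is to check the hypotheses for the concrete semigroup $P_{\lambda}$. The mildly delicate point worth stating carefully is why $E(P_{\lambda})$ carries the discrete topology away from $0$ (equivalently, why all non-minimal idempotents are isolated); for this I would cite the result from \cite{Bardyla-Gutik-2016-?} that every non-zero point of $P_{\lambda}$ is isolated in any Hausdorff semitopological semigroup topology, noting that the restriction of the semigroup operation to $E(P_{\lambda})$ makes it a semitopological (indeed topological) semilattice, so the isolation of non-zero idempotents is inherited. One should also remark that $\lambda \geqslant 2$ is used only to guarantee that $P_{\lambda}$ has a zero and is not the bicyclic monoid, so that "$S \setminus P_{\lambda}$" plays the role of "$T \setminus S$" in the lemma and the minimal-element branch of that lemma's proof applies with $e$ the zero of $P_{\lambda}$.
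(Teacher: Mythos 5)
Your proof is correct and follows essentially the same route as the paper, which states Corollary~\ref{corollary-5.7} as a direct consequence of Lemma~\ref{lemma-5.5}; your verification of hypotheses $(i)$ and $(ii)$ for $P_{\lambda}$ (maximal subgroups are trivial since $P_{\lambda}$ is combinatorial, and all non-zero idempotents are isolated by the isolation result quoted in the introduction, applied to the subspace topology that $P_{\lambda}$ inherits from $S$) is exactly the implicit content of that deduction, including the observation that the minimal idempotent is the zero with $H_0=\{0\}$ trivially Ra\u{\i}kov complete. The only trivial slip is the closing aside: $P_1$ also has a zero (it is the bicyclic monoid with zero adjoined), so $\lambda\geqslant 2$ is not what guarantees the existence of a zero, but this does not affect the argument.
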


The following theorem gives sufficient condition when a topological inverse $\lambda$-polycyclic monoid $P_{\lambda}$ is absolutely $H$-closed in the class of topological inverse semigroups.

\begin{theorem}\label{theorem-5.8}
Let $\lambda$ be a cardinal $\geqslant 2$ and  $\tau$ be a Hausdorff
inverse semigroup topology on $P_{\lambda}$ such that $U(0)\cap L$
is an infinite set for every open neighborhood $U(0)$ of zero $0$ in
$(P_{\lambda},\tau)$ and every maximal chain $L$ of the semilattice
$E(P_{\lambda})$. Then $(P_{\lambda},\tau)$ is absolutely $H$-closed
in the class of topological inverse semigroups.
\end{theorem}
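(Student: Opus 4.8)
Since $P_\lambda$ is congruence-free for $\lambda\geqslant 2$, any continuous homomorphism $h\colon(P_\lambda,\tau)\to T$ into a topological inverse semigroup is either constant, in which case $h(P_\lambda)$ is a one-point (hence closed) subsemigroup of $T$, or injective, in which case $h(P_\lambda)$ with the subspace topology is a topological inverse semigroup isomorphic to $(P_\lambda,\sigma)$ for some Hausdorff inverse semigroup topology $\sigma\subseteq\tau$ on $P_\lambda$. As $\sigma$ is coarser than $\tau$, every $\sigma$-open neighbourhood of $0$ is $\tau$-open, so $\sigma$ again satisfies the hypothesis of the theorem. Hence it suffices to prove the following statement: \emph{every Hausdorff inverse semigroup topology $\sigma$ on $P_\lambda$ for which $U(0)\cap L$ is infinite for all $\sigma$-neighbourhoods $U(0)$ of $0$ and all maximal chains $L$ of $E(P_\lambda)$ makes $(P_\lambda,\sigma)$ $H$-closed in the class of topological inverse semigroups.} Replacing $T$ by $\operatorname{cl}_T(P_\lambda)$ in the definition of $H$-closedness, it is enough to show that $(P_\lambda,\sigma)$ cannot be a \emph{proper} dense subsemigroup of a topological inverse semigroup $T$.

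\textbf{Set-up.} Assume it can, and fix $x\in T\setminus P_\lambda$. By Corollary~\ref{corollary-5.7}, after possibly replacing $x$ by $x^{-1}$ (the inversion of $T$ is a homeomorphism), the idempotent $e:=xx^{-1}$ lies in $T\setminus P_\lambda$. I record the usual generalities: $0$ remains the zero of $T$ by density and continuity; every non-zero idempotent of $P_\lambda$ is an isolated point of $T$, since an isolated point of a dense subspace of a $T_1$-space is isolated in the whole space; and $E(P_\lambda)$ is dense in $E(T)$, being the image of the dense set $P_\lambda$ under the continuous map $t\mapsto tt^{-1}$, so $e$ is an accumulation point of $E(P_\lambda)$. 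I identify $E(P_\lambda)\setminus\{0\}$ with the tree $\lambda^{<\omega}$ of finite words over $\lambda$ via $w\mapsto\varepsilon_w$, so that $\varepsilon_w\leqslant\varepsilon_u$ iff $u$ is a prefix of $w$, $\varepsilon_w\varepsilon_u=0$ when $w,u$ are $\sqsubseteq$-incomparable, and the maximal chains of $E(P_\lambda)$ are exactly the sets $L_\beta=\{0\}\cup\{\varepsilon_{\beta|_m}:m\in\omega\}$ for $\beta\in\lambda^\omega$.

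\textbf{A branch below $e$.} The crux is to produce $\beta\in\lambda^\omega$ with $e\leqslant\varepsilon_{\beta|_m}$ for all $m$. The set $C:={\uparrow}e\cap E(P_\lambda)$ is a chain containing $1$: if $\varepsilon_u,\varepsilon_v\geqslant e$ then $\varepsilon_u\varepsilon_v\geqslant e$, so $\varepsilon_u\varepsilon_v\neq 0$, so $u,v$ are comparable. If $C$ is infinite it determines such a $\beta$ at once. If $C$ is finite with least element $\varepsilon_w$, then the continuous map $t\mapsto\varepsilon_w t$, which fixes $e$ (as $e\leqslant\varepsilon_w$) and sends $E(P_\lambda)$ onto $\{0\}\cup\{\varepsilon_v:w\sqsubseteq v\}$, shows that $e$ is an accumulation point of $\{\varepsilon_v:w\sqsubsetneq v\}=\bigsqcup_{i\in\lambda}A_i$, where $A_i=\{\varepsilon_v:wi\sqsubseteq v\}$. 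Now comes the delicate point: if $e$ were not an accumulation point of any single $A_i$, then every neighbourhood of $e$ would meet infinitely many of the $A_i$, and, since $A_iA_j=\{0\}$ for $i\neq j$, a net drawn from infinitely many distinct summands and converging to $e$ would force any $\varepsilon_{v_0}$ chosen from one summand inside a prescribed neighbourhood of $e$ to satisfy $\varepsilon_{v_0}e=0$; hence $e$ would lie in the closure of $\{g\in E(P_\lambda):ge=0\}$, and continuity of $t\mapsto te$ would force $e=ee=0$, which is absurd. So $e$ is an accumulation point of some $A_{i_0}$, whence $e\leqslant\varepsilon_{wi_0}$; iterating $\omega$ times yields the required $\beta$. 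I expect this passage — confining $e$ beneath a single infinite branch, and in particular the orthogonality-and-Hausdorffness argument that isolates one child at each step, which is precisely where $\lambda\geqslant 2$ is used — to be the main obstacle.

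\textbf{Conclusion.} With $L_\beta$ a maximal chain of $E(P_\lambda)$ and $e\leqslant\varepsilon_{\beta|_m}$ for every $m$, the hypothesis on $\sigma$ (applied through the subspace topology $P_\lambda\subseteq T$) says that every $T$-neighbourhood of $0$ contains infinitely many points of $\{\varepsilon_{\beta|_m}:m\in\omega\}$; hence there is a net $\varepsilon_{\beta|_{m_\gamma}}\to 0$ in $T$. Multiplying on the right by $e$ and using $e\leqslant\varepsilon_{\beta|_{m_\gamma}}$ gives $e=\varepsilon_{\beta|_{m_\gamma}}\,e\to 0\cdot e=0$, whence $e=0$ by Hausdorffness, contradicting $e\notin P_\lambda$. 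Therefore $T\setminus P_\lambda=\varnothing$, i.e. $P_\lambda$ is closed in $T$, which proves the statement and hence the theorem.
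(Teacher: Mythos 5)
Your proof is correct, but its core runs along a genuinely different line from the paper's. Both treatments dispose of the ``absolutely'' part via congruence-freeness and both rest on the dichotomy coming from Lemma~\ref{lemma-5.5} (you through Corollary~\ref{corollary-5.7}, the paper through Corollary~\ref{corollary-5.6}); after that the paper never leaves the idempotents: it cites Theorem~2 of \cite{Bardyla-Gutik-2012} to see that every maximal chain (an $\omega$-chain with zero, on which the hypothesis forces $0$ to be non-isolated) is an absolutely $H$-closed semilattice, deduces from this that $E(P_{\lambda})$ itself is $H$-closed as a topological semilattice (a neighbourhood of an outside point must contain two incomparable idempotents, whose product is $0$), and then invokes Corollary~\ref{corollary-5.6}. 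You instead argue inside the ambient inverse semigroup $T$: Corollary~\ref{corollary-5.7} produces an idempotent $e\in T\setminus P_{\lambda}$, your tree/orthogonality argument pins $e$ below an entire maximal chain $L_{\beta}$, and the hypothesis on neighbourhoods of $0$ then forces $e=0$. What your route buys: it is essentially self-contained (no appeal to the external chain theorem nor to Corollary~\ref{corollary-5.6}), and your reduction --- a continuous homomorphism of a congruence-free semigroup is constant or injective, and the hypothesis is inherited by coarser topologies --- is cleaner than the paper's one-line treatment of homomorphic images. What the paper's route buys: the $H$-closedness of $E(P_{\lambda})$ is isolated as a reusable statement, with the general Corollary~\ref{corollary-5.6} doing the assembling. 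Two small points in your write-up: once you obtain $e\leqslant\varepsilon_{wi_0}$ you have already contradicted the minimality of $\varepsilon_w$ in the chain $C$, so $C$ is in fact infinite and no iteration ``$\omega$ times'' is needed; and for finite $\lambda$ (e.g.\ $\lambda=2$) the phrase ``meets infinitely many of the $A_i$'' cannot occur --- there the finite intersection of the witnessing neighbourhoods already gives the contradiction, and your orthogonality trick is what is needed precisely when $\lambda$ is infinite. Finally, with the paper's convention (formula~(\ref{eq-5.1})) the order on non-zero idempotents is given by suffixes rather than prefixes; this is only a relabelling of your tree and affects nothing.
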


\begin{proof}
First we observe that the definition of the $\lambda$-polycyclic monoid $P_{\lambda}$ implies that for every maximal chain $L$ in $E(P_{\lambda})$ the set $L\setminus\{0\}$ is an $\omega$-chain. Then Theorem~2 of~\cite{Bardyla-Gutik-2012} implies that every maximal chain $L$ in $E(P_{\lambda})$ with the induced topology from $(P_{\lambda},\tau)$ is an absolutely $H$-closed topological semilattice. Suppose that $E(P_{\lambda})$ with the induced topology from $(P_{\lambda},\tau)$ is not an $H$-closed topological semilattice. Then there exists a topological semilattice $S$ which contains $E(P_{\lambda})$ as a dense proper subsemilattice. Also the continuity of the semilattice operation in $S$ implies that zero $0$ of $E(P_{\lambda})$ is zero in $S$. Fix an arbitrary element $x\in S\setminus E(P_{\lambda})$. Then for an arbitrary open neighbourhood $U(x)$ of the point $x$ in $S$ such that $0\notin U(x)$ the continuity of the semilattice operation in $S$ implies that there exists an open neighbourhood $V(x)\subseteq U(x)$ of $x$ in $S$ such that $V(x)\cdot V(x)\subseteq U(x)$. Now, the neighbourhood $V(x)$ intersects infinitely many maximal chains of the semilattice $E(P_{\lambda})$, because all maximal chains in $E(P_{\lambda})$ with the induced topology from $(P_{\lambda},\tau)$ are absolutely $H$-closed topological semilattices. Then the semigroup operation of $P_{\lambda}$ implies that $0\in V(x)\cdot V(x)\subseteq U(x)$, which contradicts the choice of the neighbourhood $U(0)$. Therefore, $E(P_{\lambda})$ with the induced topology from $(P_{\lambda},\tau)$ is an $H$-closed topological semilattice.

Now, by Corollary \ref{corollary-5.6} the topological inverse semigroup $(P_{\lambda},\tau)$ is $H$-closed in the class of topological inverse semigroups. Since the $\lambda$-polycyclic monoid $P_{\lambda}$ is congruence free, every continuous homomorphic image of $(P_{\lambda},\tau)$ is $H$-closed in the class of topological inverse semigroups. Indeed, if $h\colon (P_{\lambda},\tau)\rightarrow T$ is a continuous (algebraic) homomorphism from $(P_{\lambda},\tau)$ into a topological inverse semigroup $T$, then the set $U(h(0))\cap h(L)$ is infinite for every open neighbourhood $U(h(0))$ of the image zero $h(0)$ in $T$. Then the previous part of the proof implies that $h(P_{\lambda})$ is a closed subsemigroup of $T$.
\end{proof}

\begin{remark}\label{remark-5.9}
By Remark~2.6 from \cite{Jones-Lawson-2014} (also see \cite[p.~453]{Jones-Lawson-2014}, \cite[Section~2.1]{Jones-2011} and \cite[Proposition~9.3.1]{Lawson-1998}) for every positive integer $n\geqslant 2$ any non-zero element $x$ of the polycyclic monoid $P_n$ has the form $u^{-1}v$, where $u$ and $v$ are elements of the free monoid $\mathscr{M}_n$, and the semigroup operation on $P_n$ in this representation is defined in the following way:
\begin{equation}\label{eq-5.1}
    a^{-1}b\cdot c^{-1}d=
    \left\{
      \begin{array}{ccl}
        (c_{1}a)^{-1}d, & \hbox{if~~} c=c_{1}b & \hbox{for some~} c_1\in \mathscr{M}_n;\\
        a^{-1}b_{1}d,   & \hbox{if~~} b=b_{1}c & \hbox{for some~} b_1\in \mathscr{M}_n;\\
        0,              & \hbox{otherwise}   &
      \end{array}
    \right.
    \quad \hbox{~and~} \quad a^{-1}b\cdot 0=0\cdot a^{-1}b=0\cdot 0=0.
\end{equation}
Then Lemma~2.4 of \cite{Bardyla-Gutik-2016-?} implies that every any non-zero element $x$ of the polycyclic monoid $P_{\lambda}$ has the form $u^{-1}v$, where $u$ and $v$ are elements of the free monoid $\mathscr{M}_{\lambda}$, and the semigroup operation on $P_{\lambda}$ in this representation is defined by formula (\ref{eq-5.1}).
\end{remark}

Now we shall construct a topology $\tau_{\textsf{mi}}$ on the $\lambda$-polycyclic monoid $P_\lambda$ such that $(P_{\lambda},\tau_{\textsf{mi}})$ is absolutely $H$-closed in the class of topological inverse semigroups.

\begin{example}\label{example-5.11}
We define a topology $\tau_{\textsf{mi}}$ on the polycyclic monoid
$P_{\lambda}$ in the following way. All non-zero elements of
$P_{\lambda}$ are isolated point in
$\left(P_{\lambda},\tau_{\textsf{mi}}\right)$. For an arbitrary
finite subset $A$ of $\mathscr{M}_{\lambda}$ put
\begin{equation*}
U_{A}(0)=\{a^{-1}b\colon a,b\in \mathscr{M}_{\lambda}\setminus\{A\}\}.
\end{equation*}
We put $\mathscr{B}_{\textsf{mi}}=\{U_{A}(0)\colon A \hbox{~is a
finite subset of~} \mathscr{M}_{\lambda}\}$ to be a base of the
topology $\tau_{\textsf{mi}}$ at zero $0\in P_{\lambda}$.

We observe that $\tau_{\textsf{mi}}$ is a Hausdorff topology on
$P_{\lambda}$ because $U_{\{a,b\}}(0)\not\ni a^{-1}b$ for every
non-zero element $a^{-1}b\in P_{\lambda}$. Also, since
$\left(U_{A}(0)\right)^{-1}=U_{A}(0)$ for any
$U_{A}(0)\in\mathscr{B}_{\textsf{mi}}$, the inversion is continuous
in $(P_\lambda,\tau_{\textsf{mi}})$. Fix an arbitrary $a^{-1}b\in
P_{\lambda}$ and any basic neighbourhood $U_{A}(0)$ of zero in
$(P_\lambda,\tau_{\textsf{mi}})$. Let $S_b$ be a set of all suffixes
of the word $b$. Put $B=P_b\cup\{kb\in\mathscr{M}_{\lambda}\colon
ka\in A\}$. It is obvious that the set $B$ is finite and hence
formula (\ref{eq-5.1}) implies that $a^{-1}b\cdot U_{B}(0)\subseteq
U_{A}(0)$. Let $S_a$ be a set of all suffixes of the word $a$. Put
$D=S_a\cup\{ta\in\mathscr{M}_{\lambda}\colon tb\in A\}$. It is
obvious that the set $D$ is finite and hence formula (\ref{eq-5.1})
implies that $U_{D}(0)\cdot a^{-1}b\subseteq U_{A}(0)$. Also
$U_{T}(0)\cdot U_{T}(0)\subseteq U_{A}(0)$ for $T=A\cup\{b\in
\mathscr{M}_{\lambda}\colon \hbox{~b is a suffix of some~} a\in
A\}$. Therefore  $(P_{\lambda},\tau_{\textsf{mi}})$ is a topological
inverse semigroup.
\end{example}

Theorem~\ref{theorem-5.8} and Example~\ref{example-5.11} implies the following corollary.

\begin{corollary}\label{corollary-5.12}
The topological inverse semigroup $(P_{\lambda},\tau_{\mathsf{mi}})$ is absolutely $H$-closed in the class of topological inverse semigroups.
\end{corollary}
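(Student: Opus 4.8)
The plan is to check that the topology $\tau_{\mathsf{mi}}$ constructed in Example~\ref{example-5.11} satisfies the hypotheses of Theorem~\ref{theorem-5.8}, and then to quote that theorem. By Example~\ref{example-5.11}, $(P_{\lambda},\tau_{\mathsf{mi}})$ is a Hausdorff topological inverse semigroup, so it remains only to verify that $U(0)\cap L$ is infinite for every open neighbourhood $U(0)$ of $0$ in $(P_{\lambda},\tau_{\mathsf{mi}})$ and every maximal chain $L$ of the semilattice $E(P_{\lambda})$. Since $\mathscr{B}_{\mathsf{mi}}=\{U_{A}(0)\colon A\text{ a finite subset of }\mathscr{M}_{\lambda}\}$ is a base of $\tau_{\mathsf{mi}}$ at $0$, it suffices to do this for the basic neighbourhoods $U_{A}(0)$ with $A$ finite.

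First I would make the semilattice $E(P_{\lambda})$ explicit via Remark~\ref{remark-5.9}: every non-zero idempotent of $P_{\lambda}$ has the form $e_{w}=w^{-1}w$ for a unique $w\in\mathscr{M}_{\lambda}$, and from formula~(\ref{eq-5.1}) one reads off that $e_{u}e_{v}=e_{u}$ precisely when $v$ is a suffix of $u$; in particular distinct words give distinct idempotents, and $e_{u}\leqslant e_{v}$ in $E(P_{\lambda})$ iff $v$ is a suffix of $u$, with the empty word giving the top element $1$ and $0$ the bottom. As already observed in the proof of Theorem~\ref{theorem-5.8}, for every maximal chain $L$ of $E(P_{\lambda})$ the set $L\setminus\{0\}$ is an $\omega$-chain; concretely $L=\{e_{w_{k}}\colon k\in\omega\}\cup\{0\}$, where $w_{0}$ is the empty word and each $w_{k+1}$ is obtained from $w_{k}$ by prepending one letter, so the words $w_{0},w_{1},w_{2},\dots$ are pairwise distinct and $L$ is infinite.

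Now fix a finite set $A\subseteq\mathscr{M}_{\lambda}$ and a maximal chain $L=\{e_{w_{k}}\colon k\in\omega\}\cup\{0\}$ as above. By the definition of $U_{A}(0)$, the idempotent $e_{w_{k}}=w_{k}^{-1}w_{k}$ lies in $U_{A}(0)$ whenever $w_{k}\notin A$; since the $w_{k}$ are pairwise distinct and $A$ is finite, the set $\{k\in\omega\colon w_{k}\in A\}$ is finite, hence $\{e_{w_{k}}\colon k\in\omega\}\cap U_{A}(0)$ is infinite, and therefore so is $U_{A}(0)\cap L$. Thus every basic neighbourhood of $0$ meets every maximal chain of $E(P_{\lambda})$ in an infinite set, so the hypotheses of Theorem~\ref{theorem-5.8} are satisfied, and that theorem yields that $(P_{\lambda},\tau_{\mathsf{mi}})$ is absolutely $H$-closed in the class of topological inverse semigroups.

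The only step demanding genuine care is the explicit description of $E(P_{\lambda})$ and of its maximal chains from formula~(\ref{eq-5.1}) --- in particular, getting the direction of the natural partial order right, i.e.\ that a \emph{smaller} idempotent corresponds to a \emph{longer} word with the shorter one a suffix of it. Once this bookkeeping is settled the verification is entirely routine; the substantive work has already been done in Lemmas~\ref{lemma-5.3} and~\ref{lemma-5.5}, Corollary~\ref{corollary-5.6}, Theorem~\ref{theorem-5.8} and Example~\ref{example-5.11}, and no further obstacle arises.
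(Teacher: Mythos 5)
Your proposal is correct and follows the paper's intended route exactly: the paper derives Corollary~\ref{corollary-5.12} directly from Theorem~\ref{theorem-5.8} applied to the topology of Example~\ref{example-5.11}, and your contribution is just to spell out the routine verification (idempotents are $w^{-1}w$, maximal chains of $E(P_{\lambda})$ minus $0$ are $\omega$-chains of words each a proper suffix of the next, and a finite set $A$ can exclude only finitely many of them from $U_{A}(0)$), which you do correctly, including the direction of the natural partial order.
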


\begin{definition}[\cite{Gutik-Pavlyk-2005}]\label{def-5.13} A~Hausdorff topological (inverse) semigroup
$(S, \tau)$ is said to be {\it minimal} if no Hausdorff semigroup
(inverse) topology on $S$ is strictly contained in $\tau$. If $(S,
\tau)$ is minimal topological (inverse) semigroup, then $\tau$ is
called a {\it minimal (inverse) semigroup topology.}
\end{definition}

Minimal topological groups were introduced independently in the early 1970's by Do\"{\i}tchinov~\cite{Doitchinov-1972} and
Stephenson~\cite{Stephenson-1971}. Both authors were motivated by the theory of minimal topological spaces, which was well understood at that time
(cf.~\cite{Berri-Porter-Stephenson-1971}). More than 20 years earlier L.~Nachbin~\cite{Nachbin-1949} had studied minimality in the context of division rings, and
B.~Banaschewski~\cite{Banaschewski-1974} investigated minimality in the more general setting of topological algebras. In \cite{Gutik-Pavlyk-2005} on the infinite semigroup of $\lambda\times\lambda$-matrix units $B_\lambda$ the minimal semigroup and the minimal semigroup inverse topologies were constructed.

\begin{theorem}\label{theorem-5.14} For any infinite cardinal $\lambda$,   $\tau_{\mathsf{mi}}$ is the coarsest inverse semigroup
topology on $P_\lambda$, and hence is minimal inverse semigroup.
\end{theorem}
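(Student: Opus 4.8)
The plan is to show that any Hausdorff inverse semigroup topology $\tau$ on $P_\lambda$ contains $\tau_{\mathsf{mi}}$. By Remark~5.9 every non-zero element of $P_\lambda$ is isolated in any Hausdorff semitopological-semigroup topology (this was observed in \cite{Bardyla-Gutik-2016-?}), so $\tau$ and $\tau_{\mathsf{mi}}$ agree at every point except possibly $0$. Hence the only thing to verify is that every $\tau$-open neighbourhood $W$ of $0$ contains some basic set $U_A(0)$ with $A$ a finite subset of $\mathscr{M}_\lambda$; equivalently, that $P_\lambda\setminus W$ is contained in a set of the form $\{a^{-1}b : a\in A \text{ or } b\in A\}$ for some finite $A$, i.e. only finitely many "prefixes" occur among the words appearing in the complement of $W$.

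First I would fix a Hausdorff inverse semigroup topology $\tau$ and an arbitrary $\tau$-open neighbourhood $W$ of $0$. Using continuity of the semigroup operation and of the inversion at $0$, choose a $\tau$-open $V\ni 0$ with $V=V^{-1}$ and $VV\subseteq W$. The key structural fact I would exploit is the description of $E(P_\lambda)$: the idempotents are exactly the elements $u^{-1}u$ for $u\in\mathscr{M}_\lambda$, the chain $\{0\}\cup\{u^{-1}u : u \text{ is a prefix of some fixed infinite branch}\}$ is a maximal $\omega$-chain, and for two incomparable words $u,v$ one has $u^{-1}u\cdot v^{-1}v=0$. The argument then splits into controlling two things: (a) for each generator index $i\in\lambda$, how many words starting with $p_i$ can lie outside $V$; and (b) how many distinct first letters $p_i$ can occur at all among words indexing elements outside $V$.

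For (a): suppose for contradiction that for some fixed prefix structure infinitely many idempotents $u_n^{-1}u_n$ with $u_n$ lying along a single maximal chain are outside $V$. Restricting $\tau$ to that maximal chain $L$ gives a Hausdorff semilattice topology on an $\omega$-chain; by Theorem~2 of \cite{Bardyla-Gutik-2012} such a semilattice is absolutely $H$-closed, and more to the point the coarsest semilattice topology on an $\omega$-chain $\{0,-1,-2,\dots\}$ is the one in which every nonzero point is isolated and neighbourhoods of $0$ are cofinite — so $V\cap L$ must be cofinite in $L$, i.e. only finitely many idempotents of $L$ lie outside $V$. Translating by left/right multiplications (which are $\tau$-continuous), the same finiteness propagates from idempotents $u^{-1}u$ to all elements $u^{-1}v$ with $u$ or $v$ a given word, so only finitely many elements with a prescribed pair of prefixes escape $V$. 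For (b): if infinitely many distinct first letters $p_i$ occurred among the words of elements outside $V$, pick one such element $x_i=a_i^{-1}b_i$ outside $V$ for each $i$ in an infinite set $I\subseteq\lambda$; then $x_i^{-1}x_i=b_i^{-1}b_i$ are pairwise distinct (the $b_i$ have distinct first letters, hence are pairwise incomparable, so any two of these idempotents multiply to $0$), and $\{b_i^{-1}b_i : i\in I\}$ is an infinite discrete set of idempotents all outside $V\cap E(P_\lambda)$; but since these idempotents sit in a single maximal chain together with $0$ only through $0$, a compactness-type argument on the closure of this set in $E(P_\lambda)$ — whose only possible accumulation point is $0$ — combined with continuity of multiplication forces all but finitely many into any neighbourhood of $0$, a contradiction. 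Combining (a) and (b): only finitely many words $a\in\mathscr{M}_\lambda$ occur as a prefix of some $u$ with $u^{-1}v\notin V$ for some $v$ (and symmetrically for suffixes), so letting $A$ be the finite set of all such words we get $U_A(0)\subseteq V\subseteq W$.

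The main obstacle I expect is step (b) — ruling out infinitely many distinct initial letters among the escaping elements — because unlike step (a) it cannot be handled by restricting to a single chain where the $\omega$-chain minimality theorem applies directly; one needs a genuinely two-dimensional argument about the semilattice $E(P_\lambda)$, using that it is (topologically) a "fan" of $\omega$-chains glued at $0$, and that in a Hausdorff topological semilattice an infinite set of pairwise orthogonal idempotents must accumulate only at $0$ and in fact all but finitely many must lie in any given neighbourhood of $0$ (otherwise one manufactures a non-Hausdorff or non-continuous situation via products). Once the finiteness of initial letters and, along each branch, of escaping depths is established, assembling the finite set $A$ and concluding $U_A(0)\subseteq W$ is routine, and minimality of $\tau_{\mathsf{mi}}$ as an inverse semigroup topology follows since $(P_\lambda,\tau_{\mathsf{mi}})$ is itself an inverse semigroup topology by Example~5.11.
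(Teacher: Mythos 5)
Your proposal attacks the wrong inclusion. To show that $\tau_{\mathsf{mi}}$ is the \emph{coarsest} inverse semigroup topology you must show $\tau_{\mathsf{mi}}\subseteq\tau$, i.e.\ (since all non-zero points are $\tau$-isolated) that \emph{each basic set $U_A(0)$ is a $\tau$-neighbourhood of $0$} for every finite $A\subseteq\mathscr{M}_\lambda$. What you set out to verify --- that every $\tau$-open neighbourhood $W$ of $0$ contains some $U_A(0)$ --- is the reverse statement, $\tau\subseteq\tau_{\mathsf{mi}}$ at the point $0$, i.e.\ that $\tau_{\mathsf{mi}}$ is the \emph{finest} such topology there; and that statement is simply false: the discrete topology is a Hausdorff inverse semigroup topology on $P_\lambda$, and its neighbourhood $\{0\}$ of zero contains no set $U_A(0)$ (these are all infinite). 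Consequently the auxiliary claims in your steps (a) and (b) cannot be repaired: for an arbitrary $\tau$ the maximal chains of $E(P_\lambda)$ need not be $H$-closed at all (in Theorem~\ref{theorem-5.8} this is exactly what the hypothesis on $U(0)\cap L$ is for), and even when they are, ($H$-)closedness in ambient semilattices says nothing about how coarse the induced topology is, so ``$V\cap L$ must be cofinite in $L$'' does not follow --- again the discrete topology violates it.

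The correct argument goes in the other direction and is much shorter; it is essentially the paper's proof. Fix a Hausdorff inverse semigroup topology $\tau$ on $P_\lambda$ and a finite $A\subseteq\mathscr{M}_\lambda$. The maps $\varphi(x)=xx^{-1}$ and $\psi(x)=x^{-1}x$ are $\tau$-continuous, and since $(u^{-1}v)^{-1}=v^{-1}u$ one computes $\varphi(u^{-1}v)=u^{-1}u$ and $\psi(u^{-1}v)=v^{-1}v$; hence, with $A'=\{a^{-1}a\colon a\in A\}$, one has
$U_A(0)=P_\lambda\setminus\bigl(\varphi^{-1}(A')\cup\psi^{-1}(A')\bigr)$.
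The set $A'$ is finite, hence $\tau$-closed by Hausdorffness, so $U_A(0)$ is $\tau$-open. Together with the fact you correctly quoted --- all non-zero elements are isolated in any Hausdorff (semi)topological semigroup topology on $P_\lambda$ --- this gives $\tau_{\mathsf{mi}}\subseteq\tau$, and minimality follows because $(P_\lambda,\tau_{\mathsf{mi}})$ is itself a topological inverse semigroup by Example~\ref{example-5.11}.
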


\begin{proof}
The definition of the topology $\tau_{\mathsf{mi}}$ on $P_\lambda$ implies that the subsemigroup of idempotents $E(P_\lambda)$ of the semigroup $P_\lambda$ is a compact subset of the space $(P_{\lambda},\tau_{\mathsf{mi}})$. By Proposition 3.1 of \cite{Bardyla-Gutik-2016-?} every non zero-element of a semitopological monoid $(P_{\lambda},\tau)$ is an isolated point in the space $(P_{\lambda},\tau)$. This and above arguments imply that the topology $\tau_{\mathsf{mi}}$ on $P_\lambda$ induces the coarsest semigroup topology on the semilattice $E(P_\lambda)$. Also by Remark~2.6 from \cite{Jones-Lawson-2014} (also see \cite[p.~453]{Jones-Lawson-2014}, \cite[Section~2.1]{Jones-2011} and \cite[Proposition~9.3.1]{Lawson-1998})we have that every non-zero element of the semilattice $E(P_\lambda)$ can be represented in the form $a^{-1}a$ where $a$ are elements of the free monoid $\mathscr{M}_n$, and the semigroup operation on $E(P_\lambda)$ in this representation is defined by formula (\ref{eq-5.1}).

Also, we observe that for any topological inverse semigroup $S$ the following maps $\varphi\colon S\to E(S)$ and $\psi\colon S\to E(S)$ defines by the formulae $\varphi(x)=xx^{-1}$ and $\psi(x)=x^{-1}x$, respectively, are continuous. Since the inverse element of $u^{-1}v$ in $P_\lambda$ is equal to $v^{-1}u$, we have that $U_A=P_\lambda\setminus\left(\varphi^{-1}(A)\cup\psi^{-1}(A)\right)$, for any finite subset $A$ of the free monoid $\mathscr{M}_n$. This implies that $U_A(A)\in\tau$ for every inverse semigroup topology $\tau$ on $P_\lambda$, where $A$ is an arbitrary finite subset of $\mathscr{M}_n$. Thus, $\tau_{\mathsf{mi}}$ is the coarsest inverse semigroup
topology on the $\lambda$-polycyclic monoid $P_\lambda$.
\end{proof}

In the next example we construct a topological inverse monoid $S$ which contains the polycyclic monoid $P_2=\left\langle p_1,p_2\mid p_1p_1^{-1}=p_2p_2^{-1}=1, p_1p_2^{-1}=p_2p_1^{-1}=0\right\rangle$ as a dense discrete subsemigroup, i.e., the polycyclic monoid $P_2$ with the discrete topology is not $H$-closed in the class of topological inverse semigroups. Also, later we assume that the free monoid $\mathscr{M}_2$ is generated by two element $p_1$ and $p_2$.

\begin{example}\label{example-5.15}
Let $\mathscr{F}$ be the filter on the bicyclic semigroup $\mathscr{C}(p_1,p_1^{-1})=\langle p_1,p_1^{-1}\mid p_1p_1^{-1}=1\rangle$, generated by the base $\mathscr{B}=\left\{U_{n}\colon n\in \mathbb{N}\right\}$, where $U_{n}=\left\{p_{1}^{-k}p_{1}^{m}\colon k,m> n\right\}$. We denote
\begin{equation*}
A=\left\{a^{-1}b\in P_{2}\colon a\neq p_{1}a_{1} \hbox{~and~} b\neq p_{1}b_{1} \hbox{~for any~} a_{1},b_{1}\in \mathscr{M}_{2}\right\}.
\end{equation*}

For any element $a^{-1}b$ of the set $A$ let  $\mathscr{F}_{a^{-1}b}$ be the filter on $P_2$, generated by the base $\mathscr{B}_{a^{-1}b}=\left\{V_{n}\colon n\in \mathbb{N}\right\}$, where $V_{n}=a^{-1}U_{n}b= \left\{(p_{1}^{k}a)^{-1}p_{1}^{m}b\colon k,m> n\right\}$. It is obvious that $\mathscr{F}=\mathscr{F}_{1^{-1}1}$, where $1$ is the unit element of the free monoid $\mathscr{M}_{2}$.

We extend the binary operation from $P_{2}$ onto $S= P_{2}\cup \left\{\mathscr{F}_{a^{-1}b}\colon a^{-1}b\in A\right\}$ by the following formulae:
\begin{itemize}
  \item[\textsf{(I)}] $a^{-1}b\cdot\mathscr{F}_{c^{-1}d}=
    \left\{
      \begin{array}{cl}
        \mathscr{F}_{(ea)^{-1}d}, & \hbox{if~}  c= eb ;\\
        \mathscr{F}_{(e)^{-1}d}, & \hbox{if~} b= p_{1}^{n}c  \hbox{ for some~} n\in\mathbb{N}, \hbox{~where~} e \hbox{ is the longest suffix of~} a \\
                              &\hbox{ such that } e\neq p_{1}f \hbox{ for some~} f\in M_{2};\\
        0, & \hbox{otherwise};
      \end{array}
    \right.$

  \item[\textsf{(II)}] $\mathscr{F}_{c^{-1}d}\cdot a^{-1}b=
    \left\{
      \begin{array}{cl}
        \mathscr{F}_{c^{-1}eb}, & \hbox{if~}  d= ea ;\\
        \mathscr{F}_{c^{-1}e}, & \hbox{if~} a= p_{1}^{n}d  \hbox{ for some~} n\in\mathbb{N}, \hbox{ where~} e \hbox{ is the longest suffix of~} b \\
                              & \hbox{ such that } e\neq p_{1}f \hbox{ for some~} f\in M_{2} ;\\
        0, & \hbox{otherwise};
      \end{array}
    \right.$

  \item[\textsf{(III)}] $\mathscr{F}_{a^{-1}b}\cdot \mathscr{F}_{c^{-1}d}=
    \left\{
      \begin{array}{cl}
       \mathscr{F}_{a^{-1}d}, & \hbox{if~} b=c;\\
        0, & \hbox{otherwise}.
      \end{array}
    \right.$
\end{itemize}

It is obvious that the subset $T=S\setminus P_{2}\cup \{0\}$ with the induced binary operation from $S$ is isomorphic to the semigroup of $\omega\times\omega$-matrix units $B_\omega$ and moreover we have that $(\mathscr{F}_{a^{-1}b})^{-1}= \mathscr{F}_{b^{-1}a}$ in $T$.

We determine a topology $\tau$ on the set $S$ in the following way: assume that the elements of the semigroup $P_{2}$ are isolated points in $(S,\tau)$ and the family
\begin{equation*}
\mathscr{B}({\mathscr{F}_{a^{-1}b}})=\left\{U_{n}(\mathscr{F}_{a^{-1}b}):  U_{n}\in\mathscr{B}_{a^{-1}b}\right\}
\end{equation*}
of the set $U_{n}(\mathscr{F}_{a^{-1}b})=U_{n}\cup \{\mathscr{F}_{a^{-1}b}\}$
is a neighborhood base of the topology $\tau$ at the point $\mathscr{F}_{a^{-1}b}\in S$.

Now we show that so defined binary operation on $(S,\tau)$ is continuous.

In case \textsf{(I)} we consider three cases.

If $a^{-1}b\cdot\mathscr{F}_{c^{-1}d}=0$ then we have that  $a^{-1}b\cdot U_{n}(\mathscr{F}_{c^{-1}d})=\{0\}$ for any positive integer $n$.

If $a^{-1}b\cdot\mathscr{F}_{c^{-1}d}= \mathscr{F}_{(ea)^{-1}d}$ then $c=eb$. We claim that $a^{-1}b\cdot U_{n}(\mathscr{F}_{c^{-1}d})\subseteq U_{n}(\mathscr{F}_{(ea)^{-1}d})$ for any open basic neighbourhood $U_{n}(\mathscr{F}_{(ea)^{-1}d})$ of the point $\mathscr{F}_{(ea)^{-1}d}$ in $(S,\tau)$. Indeed, if $x\in U_{n}(\mathscr{F}_{c^{-1}d})$ then $x=(p_{1}^{m}c)^{-1}p_{1}^{k}d$ for some positive integers $m,k> n$, and hence we have that
\begin{equation*}
a^{-1}b\cdot (p_{1}^{m}c)^{-1}p_{1}^{k}d= a^{-1}b\cdot (p_{1}^{m}eb)^{-1}p_{1}^{k}d= (p_{1}^{m}ea)^{-1}p_{1}^{k}d \in U_{n}(\mathscr{F}_{(ea)^{-1}d}).
\end{equation*}

If $a^{-1}b\cdot\mathscr{F}_{c^{-1}d}= \mathscr{F}_{e^{-1}d}$, then  $e$ is the longest suffix of the word $a$ in $\mathscr{M}_2$ which is not equal to the word $p_{1}f$ for some $f\in \mathscr{M}_2$. This holds when $b=p_{1}^{t}c$ for some positive integer $t$. We claim that $a^{-1}b\cdot U_{n+t}(\mathscr{F}_{c^{-1}d})\subseteq U_{n}(\mathscr{F}_{e^{-1}d})$ for any open basic neighbourhood $U_{n}(\mathscr{F}_{e^{-1}d})$ of the point $\mathscr{F}_{e^{-1}d}$ in $(S,\tau)$.
Indeed, if $x\in U_{n+t}(\mathscr{F}_{c^{-1}d})$, then $x=(p_{1}^{m+t}c)^{-1}p_{1}^{k+t}d$ for some positive integers $m,k> n$, and hence we have that
\begin{equation*}
a^{-1}b\cdot (p_{1}^{m+t}c)^{-1}p_{1}^{k+t}d= e^{-1}p_{1}^{-l}p_{1}^{t}c\cdot (p_{1}^{m+t}c)^{-1}p_{1}^{k+t}d= (p_{1}^{m+l}e)^{-1}p_{1}^{k+t}d \in U_{n}(\mathscr{F}_{e^{-1}d}).
\end{equation*}

In case \textsf{(II)} the proof of the continuity of binary operation in $(S,\tau)$ is similar to case \textsf{(I)}.

Now we consider case \textsf{(III)}.

If $\mathscr{F}_{a^{-1}b}\cdot \mathscr{F}_{c^{-1}d}=0$ then $U_{n}(\mathscr{F}_{a^{-1}b})\cdot U_{n}(\mathscr{F}_{c^{-1}d})\subseteq \{0\}$, for any open basic neighbourhoods $U_{n}(\mathscr{F}_{a^{-1}b})$ and $U_{n}(\mathscr{F}_{c^{-1}d})$ of the points $\mathscr{F}_{a^{-1}b}$ and $\mathscr{F}_{c^{-1}d}$ in $(S,\tau)$, respectively.

If $\mathscr{F}_{a^{-1}b}\cdot \mathscr{F}_{c^{-1}d}=\mathscr{F}_{a^{-1}d}$ then $b=c$ and for every any open basic neighbourhood $U_{n}(\mathscr{F}_{a^{-1}d})$ of the point $\mathscr{F}_{a^{-1}d}$ in $(S,\tau)$ we have that $U_{n}(\mathscr{F}_{a^{-1}b})\cdot U_{n}(\mathscr{F}_{b^{-1}d})\subseteq U_{n}(\mathscr{F}_{a^{-1}d})$. Indeed if $(p_{1}^{k}a)^{-1}p_{1}^{t}b\in U_{n}(\mathscr{F}_{a^{-1}b})$ and  $(p_{1}^{l}b)^{-1}p_{1}^{m}d\in U_{n}(\mathscr{F}_{b^{-1}d})$ then
\begin{equation*}
(p_{1}^{k}a)^{-1}p_{1}^{t}b\cdot (p_{1}^{l}b)^{-1}p_{1}^{m}d= (p_{1}^{k}a)^{-1}p_{1}^{t}(b\cdot b^{-1})p_{1}^{-l}p_{1}^{m}d= (p_{1}^{s}a)^{-1}p_{1}^{z}d,
\end{equation*}
for some positive integers $s,z> n$, and hence $(p_{1}^{s}a)^{-1}p_{1}^{z}d\in U_{n}(\mathscr{F}_{a^{-1}d})$.

Thus, we proved that the binary operation on $(S,\tau)$ is continuous. Taking into account that $P_2$ is a dense subsemigroup of $(S,\tau)$ we conclude that $(S,\tau)$ is a topological semigroup. Also, since $T=S\setminus P_{2}\cup \{0\}$ with the induced binary operation from $S$ is isomorphic to the semigroup of $\omega\times\omega$-matrix units $B_\omega$ we have that idempotents in $S$ commute and moreover $\mathscr{F}_{a^{-1}b}\cdot\mathscr{F}_{b^{-1}a}\cdot\mathscr{F}_{a^{-1}b}= \mathscr{F}_{b^{-1}a}$. This implies that $S$ is an inverse semigroup. Also, for every open basic neighbourhood $U_{n}(\mathscr{F}_{a^{-1}b})$ of the point $\mathscr{F}_{a^{-1}b}$ in $(S,\tau)$ we have that $\left(U_{n}(\mathscr{F}_{a^{-1}b})\right)^{-1}=U_{n}(\mathscr{F}_{b^{-1}a})$ for all $n\in\mathbb N$ and hence the inversion in $(S,\tau)$ is continuous.
\end{example}


\section*{Acknowledgements}

We acknowledge Taras Banakh for his comments and suggestions.




\end{document}